\renewcommand*\subjclass[2][2000]{%
  \def\@subjclass{#2}%
  \@ifundefined{subjclassname@#1}{%
    \ClassWarning{\@classname}{Unknown edition (#1) of Mathematics
      Subject Classification; using '1991'.}%
  }{%
    \@xp\let\@xp\subjclassname\csname subjclassname@#1\endcsname
  }%
}
\newtheorem{theorem}{Theorem}[section]
\newtheorem{lemma}[theorem]{Lemma}
\newtheorem{corollary}[theorem]{Corollary}
\newtheorem{proposition}[theorem]{Proposition}
\theoremstyle{definition}
\newtheorem{remark}[theorem]{Remark}
\numberwithin{equation}{section}
\renewcommand*\subjclass[2][2000]{%
  \def\@subjclass{#2}%
  \@ifundefined{subjclassname@#1}{%
    \ClassWarning{\@classname}{Unknown edition (#1) of Mathematics
      Subject Classification; using '1991'.}%
  }{%
    \@xp\let\@xp\subjclassname\csname subjclassname@#1\endcsname
  }%
}
\def\NABLA#1{{\mathop{\nabla\kern-.5ex\lower1ex\hbox{$#1$}}}}
\def\Nabla#1{\nabla\kern-.5ex{}_{#1}}
\def\Tabla#1{\Tilde\nabla\kern-.5ex{}_{#1}}
\renewcommand{\Tilde}{\widetilde}
\begin{document}

\title {A Sharp inequality for holomorphic functions on the polydisc}

\author{Marijan Markovi\'c}
\address{University of Montenegro, Faculty of Natural Sciences and Mathematics,
Cetinjski put b.b. 81000, Podgorica, Montenegro}
\email{marijanmmarkovic@gmail.com}

\subjclass {30H10, 32A36, 30A10}

\keywords {Weighted Bergman spaces; Hardy spaces; Isoperimetric inequality; Polydisc}

\begin{abstract} In this paper we prove an isoperimetric inequality for holomorphic
functions in the unit polydisc $\mathbf U^n$. As a corollary we derive an inclusion
relation between weighted Bergman and Hardy spaces of holomorphic functions in the
polydisc which generalizes the classical Hardy--Littlewood relation $H^p\subseteq A^{2p}$.
Also, we extend some results due to Burbea.
\end{abstract}

\maketitle

\section{Introduction and statement of the result}

\subsection{Notations}
For an integer $n\ge1$ we consider the $n$-dimensional complex vector space $\mathbf C^n$
with the usual inner product
$$\left<z,\zeta\right>=\sum_{j=1}^nz_j\overline{\zeta}_j,\quad z,\ \zeta \in \mathbf C^n$$
and norm
$$\|z\|=\left<z,z\right>^\frac 12,$$
where $z=(z_1,\dots,z_n)$, $\zeta=(\zeta_1,\dots,\zeta_n)$, $\mathbf U$ is the
open unit disc in the complex plane $\mathbf C$, $\mathbf T$ is its boundary,
and $\mathbf U^n$ and $\mathbf T^n$ stand for the unit polydisc and its distinguished
boundary, respectively.

Following the classical book of Rudin~\cite{rudin}, let us recall some basic facts
from the theory of Hardy spaces $H^p(\mathbf U^n)$ on the unit polydisc. Let $p>0$
be an arbitrary real (in the sequel the letter $p$, with or without index, will stand
for a positive real number, if we do not make restrictions). By $dm_n$ we denote the
Haar measure on the distinguished boundary $\mathbf T^n$, i.e.,
$$dm_n(\omega)=
\frac 1{(2\pi)^n}d\theta_1 \dots d\theta_n,
\quad \omega=(e^{i\theta_1},\dots,e^{i\theta_n}) \in \mathbf T^n.$$
A holomorphic function $f$ in the polydisc $\mathbf U^n$ belongs to the Hardy space
$H^p(\mathbf U^n)$ if it satisfies the growth condition
\begin{equation}\label{growthhardy}
\|f\|_{H^p(\mathbf U^n)}:=
\left(\sup_{0\le r<1}\int_{\mathbf T^n}|f(r\omega)|^p dm_n(\omega)\right)^\frac 1p
<\infty.
\end{equation}
It turns out that if $f \in H^p(\mathbf U^n)$, then there exists
$$\lim_{r\to 1}f(r\omega)=f(\omega)\ \text{ a.e. on }\ \mathbf T^n$$
and the boundary function belongs to $L^p(\mathbf T^n, m_n)$, the Lebesgue space of
all $p$-integrable functions on $\mathbf T^n$ (with respect to the measure $m_n$).
Moreover
$$\int_{\mathbf T^n}|f(\omega)|^p dm_n(\omega)
=\sup_{0\le r<1}\int_{\mathbf T^n}|f(r\omega)|^p dm_n(\omega).$$

For $q>-1$ let
$$d\mu_q(z)=\frac{(q+1)}\pi(1-|z|^2)^q dxdy \quad (z=x+iy \in \mathbf U),$$
stand for a weighted normalized measure on the disc $\mathbf U$. We will consider
also the corresponding measure on the polydisc $\mathbf U^n$,
$$d\mu_\mathbf q(z)=\prod_{k=1}^n d\mu_{q_k}(z_k),\quad z \in \mathbf U^n,$$
where $\mathbf q>\mathbf {-1}$ is an $n$-multiindex; the inequality $\mathbf {q}_1>
\mathbf{q}_2$ between two $n$-multiindices means $q_{1,k}>q_{2,k},\ k=1,\dots,n$; we
denote the $n$-multiindex $(m,\dots,m)$ by $\mathbf m$. For a real number $m,\ m>1$,
we have
$$d\mu_\mathbf {m-2}(z)=
\frac{(m-1)^n}{\pi^n}\prod_{k=1}^n(1-|z_k|^2)^{m-2}dx_kdy_k \quad (z_k=x_k+iy_k).$$

The weighted Bergman spaces $A^p_\mathbf q(\mathbf U^n),\ p>0,\ \mathbf q>\mathbf {-1}$
contain the holomorphic functions $f$ in the polydisc $\mathbf U^n$ such that
$$\|f\|_{A^p_\mathbf q (\mathbf U^n)}:=
\left(\int_{\mathbf U^n}|f(z)|^p d\mu_\mathbf q(z)\right)^\frac 1p <\infty.$$
Since $d\mu_0$ is the area measure on the complex plane normalized on the unit disc,
$A^p(\mathbf U^n):=A^p_\mathbf 0(\mathbf U^n)$ are the ordinary (unweighted) Bergman
spaces on $\mathbf U^n$.

It is well known that $\|\cdot\|_{H^p(\mathbf U^n)}$ and $\|\cdot\|_{A^p_\mathbf q(\mathbf U^n)}$
are norms on $H^p(\mathbf U^n)$ and $A^p_\mathbf q(\mathbf U^n)$, respectively, if
$p\ge1$, and quasi-norms for $0<p<1$; for simplicity, we sometimes write
$\|\cdot\|_p$ and $\|\cdot\|_{p,\mathbf q}$. As usual, $H^p(\mathbf U)$ and
$A^p_q(\mathbf U)$ are denoted by $H^p$ and $A^p_q$. Obviously, $H^p(\mathbf U^n)
 \subseteq A^p_\mathbf q(\mathbf U^n)$.

Let us point out that the Hardy space $H^2(\mathbf U^n)$ is a Hilbert space with
the reproducing kernel
\begin{equation}\label{repker}
K_n(z,\zeta)=
\prod_{j=1}^n\frac 1{1-z_j \overline{\zeta}_j},\quad z,\ \zeta \in \mathbf U^n.
\end{equation}
For the theory of reproducing kernels we refer to~\cite{aron}.
\subsection{Short background} The solution to the isoperimetric problem is usually
expressed in the form of an inequality that relates the length $L$ of a closed
curve and the area $A$ of the planar region that it encloses. The isoperimetric
inequality states that
$$4\pi A \le L^2,$$
and that equality holds if and only if the curve is a circle. Dozens of proofs
of the isoperimetric inequality have been proposed. More than one approach can be
found in the expository papers by Osserman~\cite{or}, Gamelin and Khavinson~\cite{gk}
and Bl\"asj\"o~\cite{bla} along with a brief history of the problem. For a survey
of some known generalizations to higher dimensions and the list of some open
problems, we refer to the paper by B$\mathrm{\acute{e}}$n$\mathrm{\acute{e}}$teau
and Khavinson~\cite{bek}.

In~\cite{tc}, Carleman gave a beautiful proof of the isoperimetric inequality in
the plane, reducing it to an inequality for holomorphic functions in the unit disc.
Following Carleman's result, Aronszajn in~\cite{aron} showed that if $f_1$ and
$f_2$ are holomorphic functions in a simply connected domain $\Omega$ with analytic
boundary $\partial \Omega$, such that $f_1,\ f_2\in H^2(\Omega)$, then
\begin{equation}\label{car}
\int_\Omega |f_1|^2 |f_2|^2 dxdy
\le \frac 1{4\pi}\int_{\partial \Omega}|f_1|^2|dz|\int_{\partial \Omega}|f_2|^2|dz|
\quad (z=x+iy).
\end{equation}
In~\cite{jacobs} Jacobs considered not only simply connected domains (see also the
Saitoh work~\cite{ss}).

Mateljevi\'c and Pavlovi\'c in~\cite{mp} generalized~\eqref{car} in the following
sense: if $f_j\in H^{p_j}(\Omega),\ j=1,\ 2$, where $\Omega$ is a simply connected
domain with analytic boundary $\partial \Omega$, then
\begin{equation}
\frac 1\pi\int_\Omega |f_1|^{p_1}|f_2|^{p_2}dxdy
\le
\frac 1{4\pi^2}\int_{\partial \Omega}|f_1|^{p_1}|dz| \int_{\partial \Omega}|f_2|^{p_2}|dz|,
\end{equation}
with equality if and only if either $f_1f_2\equiv0$ or if for some $C_j\ne0$,
$$f_j= C_j(\psi')^\frac 1{p_j},\quad j=1,\ 2,$$
where $\psi$ is a conformal mapping of the domain $\Omega$ onto the disc $\mathbf U$.

By using a similar approach as Carleman, Strebel in his book~\cite[Theorem~19.9, pp. 96--98]{ks}
(see also the papers~\cite{malect} and~\cite{vuk}) proved that if $f\in H^p$ then
\begin{equation}\label{strebel}
\int_\mathbf U|f(z)|^{2p}dxdy
\le\frac 1{4\pi} \left(\int_0^{2\pi}|f(e^{i\theta})|^pd\theta\right)^2
\quad (\|f\|_{A^{2p}} \le \|f\|_{H^p}),
\end{equation}
with equality if and only if for some constants $\zeta,\ |\zeta|<1$ and $\lambda$,
$$f(z)=\frac{\lambda}{(1-z\overline {\zeta})^\frac 2p}.$$
Further, Burbea in~\cite{burbea} generalized~\eqref{strebel} to
\begin{equation}\label{bur}
\frac{m-1}\pi\int_\mathbf U |f(z)|^{mp}(1-|z|^2)^{m-2} dxdy
\le \left(\frac 1{2\pi}\int_0^{2\pi}|f(e^{i\theta})|^p d\theta\right)^m,
\end{equation}
where $m\ge2$ is an integer. The equality is attained in the same case as in the
relation~\eqref{strebel}. The inequality~\eqref{bur} can be rewritten as
$$\|f\|_{A^{mp}_{m-1}} \le \|f\|_{H^p}, \quad f\in H^p,$$
which is a generalization of the inclusion $ H^p\subseteq A^{2p}$, proved by Hardy
and Littlewood in~\cite{hardy}.

On the other hand, Pavlovi\'c and Dostani\'c showed in~\cite{pd} that if $\mathbf B_n$
is the unit ball in $\mathbf C^n,\ \partial\mathbf B_n$ its boundary, and $\sigma_n$
is the normalized surface area measure on the sphere $\partial\mathbf B_n$, then
$$\int_{\partial\mathbf B_n}|f|^{2n}d\sigma_n \le \left(\int_{\mathbf T^n}|f|^2dm_n\right)^n$$
holds for $f\in H^2(\mathbf U^n)$. They pointed out that this inequality coincides
with~\eqref{bur} for $m=n,\ p=2$ and $f(z)=f(z_1,\dots,z_n)=f(z_1)$ that is if $f$
actually depends only on one complex variable.

For an isoperimetric inequality for harmonic functions we refer to~\cite{kame}.
\subsection{Statement of the result} In the sequel, $m$ stands for an integer $\ge2$.
The starting point of this paper is the work of Burbea~\cite{burbea} who obtained the
following isoperimetric inequalities concerning the unit disc and the unit polydisc.

\begin{proposition}\label{burbeadisc} Let $f_j \in H^{p_j},\ j=1,\dots,m$. Then
\begin{equation}\label{burbea}
\int_\mathbf U \prod_{j=1}^m|f_j|^{p_j} d\mu_{m-2}
\le \prod_{j=1}^m\int_\mathbf T|f_j|^{p_j} dm_1.
\end{equation}

Equality holds if and only if either some of the functions are identically equal to
zero or if for some point $\zeta \in \mathbf U$ and constants $C_j\ne 0$,
$$f_j=C_j K_1^\frac 2{p_j}(\cdot,\zeta),\quad j=1,\dots,m,$$
where $K_1$ is the reproducing kernel~\eqref{repker} for the Hardy space $H^2$.
\end{proposition}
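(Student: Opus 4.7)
The plan is to reduce the inequality to a Hilbert-space statement on $H^2$ and attack it via Taylor coefficients and the Cauchy--Schwarz inequality (Carleman's classical device). First I would use the inner--outer factorization $f_j=I_jF_j$: since $|I_j|\le1$ on $\mathbf U$ with $|I_j|=1$ a.e.\ on $\mathbf T$, replacing each $f_j$ by its outer factor $F_j$ only decreases the left-hand side of~\eqref{burbea} while leaving the right-hand side unchanged. Hence one may assume every $f_j$ is zero-free on $\mathbf U$, so that $g_j:=f_j^{p_j/2}$ is a well-defined single-valued holomorphic function. The identities $|g_j|^{2}=|f_j|^{p_j}$ on $\overline{\mathbf U}$ and $\|g_j\|_{H^2}^{2}=\|f_j\|_{H^{p_j}}^{p_j}$ show that $g_j\in H^2$, and for $G:=g_1g_2\cdots g_m$ one has $\prod_j|f_j|^{p_j}=|G|^{2}$. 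Thus~\eqref{burbea} is equivalent to the bound
\[
\|G\|_{A^2_{m-2}}^{2}\le\prod_{j=1}^{m}\|g_j\|_{H^2}^{2}.
\]

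Next I would expand in Taylor series: writing $g_j(z)=\sum_{k\ge0}b_{j,k}z^k$ gives $G(z)=\sum_{k\ge0}a_kz^k$ with the convolution formula
\[
a_k=\sum_{k_1+\cdots+k_m=k}b_{1,k_1}b_{2,k_2}\cdots b_{m,k_m}.
\]
A direct Beta-function computation yields $\|z^k\|_{A^2_{m-2}}^{2}=k!(m-1)!/(k+m-1)!=1/\binom{k+m-1}{m-1}$, so
\[
\|G\|_{A^2_{m-2}}^{2}=\sum_{k\ge0}\frac{|a_k|^{2}}{\binom{k+m-1}{m-1}},\qquad \prod_{j=1}^{m}\|g_j\|_{H^2}^{2}=\sum_{k\ge0}\sum_{k_1+\cdots+k_m=k}|b_{1,k_1}|^{2}\cdots|b_{m,k_m}|^{2}.
\]
The key combinatorial fact is that $\binom{k+m-1}{m-1}$ counts precisely the tuples $(k_1,\dots,k_m)\in\Z_{\ge0}^{m}$ with $k_1+\cdots+k_m=k$, so Cauchy--Schwarz applied to the expression for $a_k$ gives
\[
|a_k|^{2}\le\binom{k+m-1}{m-1}\sum_{k_1+\cdots+k_m=k}|b_{1,k_1}|^{2}\cdots|b_{m,k_m}|^{2};
\]
the two binomials cancel after division, and summation on $k$ produces exactly the target.

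For the equality analysis, equality in Cauchy--Schwarz at every level $k$ forces the product $b_{1,k_1}\cdots b_{m,k_m}$ to depend only on $k_1+\cdots+k_m$; induction on $k$ (using $k=1$ to identify a common ratio $\overline{\zeta}:=b_{j,1}/b_{j,0}$ for all $j$) shows that each sequence $(b_{j,k})_{k\ge0}$ is geometric with ratio $\overline{\zeta}$, i.e.\ $g_j(z)=C_j(1-\overline{\zeta}z)^{-1}=C_jK_1(z,\zeta)$ with $|\zeta|<1$. Undoing the substitution yields the extremals $f_j=C_j'K_1^{2/p_j}(\cdot,\zeta)$ listed in the proposition. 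I expect the main obstacles to be justifying the outer-factor reduction cleanly for non-integral $p_j$ (where $f_j^{p_j/2}$ requires a chosen branch of the logarithm) and, in the equality case, verifying that the common ratio $\overline{\zeta}$ really coincides for all $j$ rather than varying with the index.
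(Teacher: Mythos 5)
The paper does not prove Proposition~\ref{burbeadisc} at all --- it is quoted as a known result of Burbea \cite{burbea} (for $m=2$ it is Carleman's inequality) --- so there is no in-paper argument to compare against; your proof is essentially the classical Carleman--Burbea argument and it is correct. The reduction to outer factors, the identity $\|z^k\|_{A^2_{m-2}}^2=1/\binom{k+m-1}{m-1}$, the multinomial Cauchy--Schwarz step, and the induction showing the coefficient sequences are geometric with a ratio $\overline\zeta$ common to all $j$ (forced already at level $k=1$, so the obstacle you flag does not materialize) all check out. Two routine points deserve a line each: in the equality case you must pass back through the inner--outer reduction, observing that equality forces $\prod_j|I_j|^{p_j}\equiv1$ on $\mathbf U$ (since the outer factors are zero-free and the integrand comparison is pointwise), hence each $I_j$ is a unimodular constant; and the ``if'' direction, that $f_j=C_jK_1^{2/p_j}(\cdot,\zeta)$ actually attains equality, which is immediate since then $b_{j,k}=C_j\overline\zeta^{\,k}$ makes every Cauchy--Schwarz step an equality.
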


\begin{proposition}\label{burbeapoly} Let $f_j \in H^2(\mathbf U^n),\ j=1,\dots,m$.
Then
\begin{equation}
\int_{\mathbf U^n}\prod_{j=1}^m|f_j|^2 d\mu_\mathbf {m-2}
\le\prod_{j=1}^m\int_{\mathbf T^n}|f_j|^2 dm_n.
\end{equation}

Equality holds if and only if either some of the functions are identically equal to
zero or if for some point $\zeta \in \mathbf U^n$ and constants $C_j\ne0$,
$$f_j=C_j K_n(\cdot,\zeta),\quad j=1,\dots, m,$$
where $K_n$ is the reproducing kernel~\eqref{repker}.
\end{proposition}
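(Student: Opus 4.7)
My plan is to imitate the Hardy--Littlewood / Burbea coefficient argument that underlies Proposition~\ref{burbeadisc}. The argument transfers cleanly to the polydisc because both $H^2(\mathbf U^n)$ and $A^2_{\mathbf{m-2}}(\mathbf U^n)$ are tensor products of their one-dimensional counterparts, so the monomials $z^\alpha=z_1^{\alpha_1}\cdots z_n^{\alpha_n}$ form orthogonal bases in each. A one-line Beta-function computation in each coordinate gives the monomial norms
$$\|z^\beta\|^2_{A^2_{\mathbf{m-2}}(\mathbf U^n)}=\prod_{k=1}^n\frac{\beta_k!\,(m-1)!}{(\beta_k+m-1)!},$$
whose reciprocal $N(\beta):=\prod_{k=1}^n\binom{\beta_k+m-1}{m-1}$ equals, by a stars-and-bars count, the number of ordered $m$-tuples of multi-indices $(\alpha^{(1)},\dots,\alpha^{(m)})$ summing to $\beta$. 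This is the numerical coincidence that makes the weight $\mathbf{m-2}$ precisely the right one.

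First I will expand each $f_j(z)=\sum_\alpha a_{j,\alpha}z^\alpha$, so that $\|f_j\|_{H^2(\mathbf U^n)}^2=\sum_\alpha|a_{j,\alpha}|^2$. The product $f_1\cdots f_m$ has Taylor coefficients $c_\beta=\sum_{\alpha^{(1)}+\cdots+\alpha^{(m)}=\beta}a_{1,\alpha^{(1)}}\cdots a_{m,\alpha^{(m)}}$. Applying the elementary bound $|\sum_{i=1}^N x_i|^2\le N\sum_{i=1}^N|x_i|^2$ to these $N(\beta)$ summands yields
$$|c_\beta|^2\,\|z^\beta\|^2_{A^2_{\mathbf{m-2}}}\le\sum_{\alpha^{(1)}+\cdots+\alpha^{(m)}=\beta}\prod_{j=1}^m|a_{j,\alpha^{(j)}}|^2.$$
Summing over $\beta$ and using orthogonality of monomials in $A^2_{\mathbf{m-2}}(\mathbf U^n)$ turns the left-hand side into $\int_{\mathbf U^n}|f_1\cdots f_m|^2\,d\mu_{\mathbf{m-2}}$, while the right-hand side rearranges into a free $m$-fold sum that factors as $\prod_j\|f_j\|^2_{H^2(\mathbf U^n)}$, proving the inequality.

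For the extremal characterization I will trace the equality case of Cauchy--Schwarz. If some $f_j\equiv0$, both sides vanish; otherwise equality at every $\beta$ forces the products $a_{1,\alpha^{(1)}}\cdots a_{m,\alpha^{(m)}}$ to be constant on each level set $\sum\alpha^{(j)}=\beta$. Comparing $\beta=e_k$ with $\beta=0$ produces scalars $\bar\zeta_k$ with $a_{j,e_k}=a_{j,0}\,\bar\zeta_k$ independent of $j$, and induction on $|\beta|$ then yields $a_{j,\alpha}=a_{j,0}\,\bar\zeta^{\,\alpha}$, i.e.\ $f_j=a_{j,0}\,K_n(\cdot,\zeta)$. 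Square-summability of $\bar\zeta^{\,\alpha}$ forced by $f_j\in H^2(\mathbf U^n)$ then gives $\zeta\in\mathbf U^n$. I expect the main technical obstacle to be this equality analysis: one must ensure that none of the $a_{j,0}$ vanish (so the ratios defining $\zeta$ make sense), and verify that the inductive step genuinely propagates the multiplicative pattern without branching ambiguity. Everything else is essentially the same Hilbert-space calculation as in the disc case, made possible by the clean tensor product structure of the relevant Hardy and weighted Bergman spaces on the polydisc.
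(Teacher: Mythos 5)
Your proof is correct, but it follows a genuinely different route from the paper, which does not actually prove Proposition~\ref{burbeapoly}: the paper simply quotes it as a special case of Theorem~4.1 of Burbea~\cite{burbea}, a result obtained there from general reproducing-kernel considerations (inequalities between Bergman-type norms built from powers of kernels). Your argument is instead the elementary coefficient computation in the spirit of Carleman and Vukoti\'c, transplanted to the polydisc via the tensor structure: the monomial norms in $A^2_{\mathbf{m-2}}(\mathbf U^n)$ are the reciprocals of $N(\beta)=\prod_{k=1}^n\binom{\beta_k+m-1}{m-1}$, which by stars and bars counts the ordered decompositions of $\beta$ into $m$ multi-indices, and the bound $|\sum_{i=1}^N x_i|^2\le N\sum_{i=1}^N|x_i|^2$ applied to the convolution formula for the coefficients of $f_1\cdots f_m$ gives exactly the inequality after Parseval on $\mathbf T^n$ and on the weighted polydisc (both rearrangements are legitimate since all terms are nonnegative). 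The equality analysis also closes: the point you flag, that $a_{j,0}\ne 0$ when no $f_j$ vanishes identically, follows by choosing for each $j$ a nonzero coefficient $a_{j,\gamma^{(j)}}$ and comparing the resulting nonzero product with a second decomposition of the same $\beta$ that contains the factor $a_{j,0}$; equality of all products over a level set then forbids $a_{j,0}=0$, and your induction on $|\beta|$ (using $m\ge2$ to shift one unit $e_k$ into a second slot) propagates $a_{j,\alpha}=a_{j,0}\bar\zeta^{\,\alpha}$ without branching, with $\zeta\in\mathbf U^n$ forced by square-summability. The trade-off: your proof is self-contained and elementary but rigidly tied to the exponent $2$ (Parseval is what turns integrals into coefficient sums), whereas Burbea's kernel-theoretic theorem, as cited, sits inside a more general framework; for the role Proposition~\ref{burbeapoly} plays in this paper, either justification suffices.
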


Proposition~\ref{burbeapoly} is a particular case of Theorem~4.1 in the Burbea
paper~\cite[p. 257]{burbea}. That theorem was derived from more general
considerations involving the theory of reproducing kernels (see also~\cite{bur0}).
The inequality in that theorem is between Bergman type norms, while Proposition~\ref{burbeapoly}
is the case with the Hardy norm on the right side (in that case, we have an
isoperimetric inequality). In the next main theorem we extend~\eqref{burbea} for
holomorphic functions which belong to general Hardy spaces on the polydisc $\mathbf U^n$.

\begin{theorem}\label{mainth} Let $f_j\in H^{p_j}(\mathbf U^n),\ j=1,\dots,m$.
Then
\begin{equation}\label{maineq}
\int_{\mathbf U^n}\prod_{j=1}^m|f_j|^{p_j}d\mu_\mathbf{m-2}
\le \prod_{j=1}^m\int_{\mathbf T^n}|f_j|^{p_j} dm_n.
\end{equation}

Equality occurs if and only if either some of the functions are identically equal to
zero or if for some point $\zeta \in \mathbf U^n$ and constants $C_j\ne0$,
$$f_j=C_jK_n^\frac 2{p_j}(\cdot,\zeta),\quad j=1,\dots, m.$$
\end{theorem}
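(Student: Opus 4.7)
The plan is to prove Theorem~\ref{mainth} by induction on the polydisc dimension $n$, with Proposition~\ref{burbeadisc} serving both as the base case $n=1$ and, applied slice-by-slice, as the main one-variable tool in the inductive step. A preliminary reduction lets me assume each $f_j$ is holomorphic on a neighbourhood of $\overline{\mathbf U^n}$ (hence in $H^\infty$): replace $f_j$ by the dilate $f_j^{(r)}(z)=f_j(rz)$ with $r\in(0,1)$, prove the inequality for these, and pass to $r\to 1^-$. The right side converges to $\|f_j\|_{H^{p_j}}^{p_j}$ by~\eqref{growthhardy}, while the left side obeys Fatou's lemma. We may also assume no $f_j$ is identically zero.

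For the inductive step, fix $n\ge 2$ and split coordinates as $z=(z_1,z')\in\mathbf U\times\mathbf U^{n-1}$. For every $z'\in\mathbf U^{n-1}$ the slice $z_1\mapsto f_j(z_1,z')$ is in $H^\infty(\mathbf U)$, so Proposition~\ref{burbeadisc} gives
$$
\int_{\mathbf U}\prod_{j=1}^m|f_j(z_1,z')|^{p_j}\,d\mu_{m-2}(z_1)
\le\prod_{j=1}^m\int_{\mathbf T}|f_j(\omega_1,z')|^{p_j}\,dm_1(\omega_1).
$$
Integrating over $z'$ against $d\mu_\mathbf{m-2}$, the left side becomes the LHS of~\eqref{maineq}. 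The crucial manoeuvre is to introduce $m$ independent dummy variables $\omega_1^{(1)},\ldots,\omega_1^{(m)}\in\mathbf T$, one per factor, and rewrite the product as an integral over $\mathbf T^m$:
$$
\prod_{j=1}^m\int_{\mathbf T}|f_j(\omega_1,z')|^{p_j}\,dm_1(\omega_1)
=\int_{\mathbf T^m}\prod_{j=1}^m|f_j(\omega_1^{(j)},z')|^{p_j}\prod_{j=1}^m dm_1(\omega_1^{(j)}).
$$
Exchanging the order of integration, the inner integrand over $\mathbf U^{n-1}$ is now a genuine product $\prod_j|g_j(z')|^{p_j}$ with $g_j(z')=f_j(\omega_1^{(j)},z')\in H^\infty(\mathbf U^{n-1})$ for a.e.\ tuple. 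The inductive hypothesis on $\mathbf U^{n-1}$ then bounds it by $\prod_j\int_{\mathbf T^{n-1}}|f_j(\omega_1^{(j)},\omega')|^{p_j}dm_{n-1}(\omega')$; pulling out the product and invoking Fubini once more collapses the outer $\mathbf T^m$ integration into $\prod_j\int_{\mathbf T^n}|f_j|^{p_j}dm_n$, which is the RHS of~\eqref{maineq}.

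For the equality case, a direct computation using $\int_{\mathbf U}|1-z\bar\zeta|^{-2m}d\mu_{m-2}(z)=(1-|\zeta|^2)^{-m}$ (the reproducing property of the Bergman kernel) shows that $f_j=C_jK_n(\cdot,\zeta)^{2/p_j}$ makes both sides of~\eqref{maineq} equal $\prod_j|C_j|^{p_j}K_n(\zeta,\zeta)^m$. Conversely, if equality holds then equality must hold slice-wise in Proposition~\ref{burbeadisc} for a.e.\ $z'$, forcing $f_j(z_1,z')=C_j(z')\bigl(1-z_1\overline{\zeta_1(z')}\bigr)^{-2/p_j}$ with a common base point $\zeta_1(z')\in\mathbf U$; joint holomorphicity rigidifies $\zeta_1(z')$ to a constant and makes the $z'$-dependence factor multiplicatively, and iterating the same analysis via the inductive equality statement in the remaining variables yields the claimed form.

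The main obstacle is the inductive step. A naive one-variable peeling produces the integrand $\prod_j h_j(z')$ with $h_j(z')=\int_{\mathbf T}|f_j(\omega_1,z')|^{p_j}dm_1$, and the functions $h_j$ are \emph{not} of the holomorphic form $|g_j|^{p_j}$ that the inductive hypothesis requires. The resolution --- postponing the one-variable integrations by introducing independent dummy variables $\omega_1^{(j)}$ so that the inductive hypothesis is applied to honest holomorphic slices rather than to their $H^{p_j}$-norm averages --- is the heart of the argument; the remaining issues (legitimacy of Fubini, slicing for $H^\infty$ functions, and the dilation approximation) are routine.
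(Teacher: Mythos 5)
Your proof of the inequality itself is correct and takes a genuinely different route from the paper. The paper handles the obstacle you correctly identify (the peeled-off factors $h_j(z')=\int_{\mathbf T}|f_j(\omega_1,z')|^{p_j}dm_1$ are not of the form $|g_j|^{p_j}$ with $g_j$ holomorphic) by first extending Burbea's one-variable inequality to logarithmically subharmonic functions in $h^1_{PL}$ (its Lemma~2.1) and then proving that each $h_j$ is logarithmically subharmonic, continuous, and in $h^1_{PL}$ (its Lemma~2.5, which requires a Hardy--Littlewood pointwise estimate and Vitali's convergence theorem). Your tensorization trick --- writing $\prod_j h_j(z')$ as an integral over $\mathbf T^m$ in independent dummy variables so that the inductive hypothesis is applied to honest holomorphic slices $f_j(\omega_1^{(j)},\cdot)$, and then letting the product of integrals re-factor at the end by Fubini --- avoids all of the subharmonicity machinery, and the dilation reduction makes every slice an $H^\infty$ function so that Fubini/Tonelli and the slice-wise application of Proposition~\ref{burbeadisc} are unproblematic. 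This is a clean and legitimate alternative for the inequality \eqref{maineq}.

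The equality case, however, has a genuine gap, and it is precisely where the technicalities you deferred come back. Your forward implication begins with ``if equality holds then equality must hold slice-wise in Proposition~\ref{burbeadisc} for a.e.\ $z'$,'' but that conclusion requires the \emph{entire} chain of inequalities to be valid for the original, undilated functions: you need to know, for $f_j\in H^{p_j}(\mathbf U^n)$ itself, that $\int_{\mathbf U^{n-1}}\prod_j h_j\,d\mu_{\mathbf{m-2}}\le\prod_j\|f_j\|_{H^{p_j}}^{p_j}$, so that overall equality squeezes the first step. Your proof of that second estimate uses boundary slices $f_j(\omega_1,\cdot)$ for $\omega_1\in\mathbf T$ as elements of $H^{p_j}(\mathbf U^{n-1})$, which is exactly the non-routine slicing issue for polydisc Hardy spaces that the dilation was introduced to sidestep (and that the paper's Lemma~2.5 is built to handle); having proved the inequality only for dilates and passed to the limit by Fatou, you cannot extract slice-wise equality for the limit functions. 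Second, even granting slice-wise equality, the assertion that ``joint holomorphicity rigidifies $\zeta_1(z')$ to a constant and makes the $z'$-dependence factor multiplicatively'' is stated without proof and is not obvious: from $f_j(z_1,z')=C_j(z')(1-z_1\overline{\zeta_1(z')})^{-2/p_j}$ one only reads off directly that $\overline{\zeta_1(\cdot)}$ is holomorphic in $z'$, which does not force constancy by itself. The paper avoids this rigidity argument entirely: it shows that equality forces each $f_j$ to be zero-free on the polydisc, takes single-valued branches $f_j^{p_j/2}\in H^2(\mathbf U^n)$, and then invokes Burbea's $H^2$ polydisc result (Proposition~\ref{burbeapoly}) to identify the extremals. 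You would need either to carry out the rigidification honestly or to adopt a step of that kind.
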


Notice that in higher complex dimensions there is no analog of the Blaschke product so
we cannot deduce Theorem~\ref{mainth} directly from Proposition~\ref{burbeapoly} as we
can for $n=1$ (this is a usual approach in the theory of $H^p$ spaces; see also~\cite{burbea}).
We will prove the main theorem in the case $n=2$ since for $n>2$ our method needs only
a technical adaptation. As immediate consequences of Theorem~\ref{mainth}, we have the
next two corollaries.

\begin{corollary} Let $p\ge1$. The (polylinear) operator
$\beta:\bigotimes_{j=1}^m H^p(\mathbf U^n) \to A^{p}_\mathbf{m-2}(\mathbf U^n)$,
defined by $\beta(f_1,\dots,f_m)=\prod_{j=1}^mf_j$ has norm one.
\end{corollary}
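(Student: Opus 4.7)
The claim is that $\|\beta\|=1$, so I need to prove both the upper bound $\|\beta\|\le 1$ and show this bound is attained.

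For the upper bound, the plan is to apply Theorem~\ref{mainth} directly with $p_1=\cdots=p_m=p$. Since
\[
\Norm{\beta(f_1,\dots,f_m)}_{A^p_{\mathbf{m-2}}(\mathbf U^n)}^{p}
=\int_{\mathbf U^n}\Bigl|\prod_{j=1}^m f_j\Bigr|^{p}\,d\mu_{\mathbf{m-2}}
=\int_{\mathbf U^n}\prod_{j=1}^m |f_j|^{p}\,d\mu_{\mathbf{m-2}},
\]
the theorem gives
\[
\Norm{\beta(f_1,\dots,f_m)}_{A^p_{\mathbf{m-2}}(\mathbf U^n)}^{p}
\le\prod_{j=1}^m\int_{\mathbf T^n}|f_j|^{p}\,dm_n
=\prod_{j=1}^m\|f_j\|_{H^p(\mathbf U^n)}^{p},
\]
so $\|\beta(f_1,\dots,f_m)\|_{A^p_{\mathbf{m-2}}}\le\prod_j\|f_j\|_{H^p}$. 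Since $p\ge 1$ guarantees these expressions are genuine norms, this says the polylinear operator $\beta$ is bounded with $\|\beta\|\le 1$ (equivalently, its linearization on the projective tensor product has operator norm at most one).

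To see that the norm equals one, I plan to test $\beta$ on constants. Take $f_j\equiv 1$ for $j=1,\dots,m$; then trivially $\|f_j\|_{H^p(\mathbf U^n)}=1$. For the image, note that each factor measure $d\mu_{q_k}$ on $\mathbf U$ is normalized by construction (the constant $\tfrac{q_k+1}{\pi}$ is exactly chosen so that $\mu_{q_k}(\mathbf U)=1$), hence $d\mu_{\mathbf{m-2}}$ is a probability measure on $\mathbf U^n$, so
\[
\|\beta(1,\dots,1)\|_{A^p_{\mathbf{m-2}}(\mathbf U^n)}
=\Bigl(\int_{\mathbf U^n}d\mu_{\mathbf{m-2}}\Bigr)^{1/p}=1
=\prod_{j=1}^m\|f_j\|_{H^p(\mathbf U^n)},
\]
which forces $\|\beta\|\ge 1$. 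Combining the two bounds gives $\|\beta\|=1$.

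There is no real obstacle here: the corollary is essentially a rephrasing of the $p_1=\cdots=p_m=p$ case of Theorem~\ref{mainth} together with the observation that equality is realized by the choice $\zeta=0$ in the extremal family $f_j=C_j K_n^{2/p}(\cdot,\zeta)$, since $K_n(\cdot,0)\equiv 1$. The only points worth double-checking are that the normalization of $d\mu_{\mathbf{m-2}}$ makes it a probability measure, and that the convention in the statement (polylinear operator norm on $\bigotimes_j H^p$) coincides with $\sup\{\|\beta(f_1,\dots,f_m)\|:\|f_j\|\le 1\}$, which is standard for $p\ge 1$.
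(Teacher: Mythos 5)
Your argument is correct and is exactly the intended one: the paper states this corollary without proof as an ``immediate consequence'' of Theorem~\ref{mainth}, and your two steps (the upper bound from the theorem with $p_1=\dots=p_m=p$, and attainment on constants, which are the extremals $C_jK_n^{2/p_j}(\cdot,0)$ since $\mu_{\mathbf{m-2}}$ is a probability measure) supply precisely the missing details.
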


\begin{corollary} Let $f\in H^p(\mathbf U^n)$. Then
$$\int_{\mathbf U^n}|f|^{mp}d\mu_\mathbf {m-2} \le \left(\int_{\mathbf T^n} |f|^p dm_n\right)^m.$$
Equality occurs if and only if for some point $\zeta \in \mathbf U^n$ and constant
$\lambda$,
$$f=\lambda K_n^\frac 2{p}(\cdot,\zeta).$$

In other words we have the sharp inequality
$$\|f\|_{A^{mp}_\mathbf {m-2} (\mathbf U^n)} \le \|f\|_{H^p(\mathbf U^n)},
\quad f \in H^{p}(\mathbf U^n)$$
and the inclusion
$$H^p(\mathbf U^n) \subseteq A^{mp}_\mathbf {m-2}(\mathbf U^n).$$
Thus, when $p\ge1$, the inclusion map
$I_{p,m}: H^p(\mathbf U^n) \to A^{mp}_\mathbf {m-2}(\mathbf U^n),\ I_{p,m}(f):=f$,
has norm one.
\end{corollary}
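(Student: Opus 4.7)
The plan is to obtain this corollary as an immediate specialization of Theorem~\ref{mainth}. I would set $f_1=f_2=\cdots=f_m=f$ and $p_1=\cdots=p_m=p$ in the inequality \eqref{maineq}. The left-hand side collapses to $\int_{\mathbf U^n}|f|^{mp}d\mu_{\mathbf{m-2}}$ and the right-hand side to $\bigl(\int_{\mathbf T^n}|f|^p\,dm_n\bigr)^m$, which is exactly the asserted inequality. Taking $p$-th roots then yields the norm inequality $\|f\|_{A^{mp}_{\mathbf{m-2}}(\mathbf U^n)}\le\|f\|_{H^p(\mathbf U^n)}$ and the inclusion $H^p(\mathbf U^n)\subseteq A^{mp}_{\mathbf{m-2}}(\mathbf U^n)$.

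For the equality case, I would read it off from the corresponding clause of Theorem~\ref{mainth}. Either some $f_j$ vanishes identically, which here means $f\equiv 0$ (absorbed into the family $\lambda K_n^{2/p}(\cdot,\zeta)$ by taking $\lambda=0$), or there exist $\zeta\in\mathbf U^n$ and nonzero constants $C_j$ with $f=C_jK_n^{2/p_j}(\cdot,\zeta)=C_jK_n^{2/p}(\cdot,\zeta)$ for every $j$. Comparing any two indices forces $C_1=\cdots=C_m=:\lambda$, so $f=\lambda K_n^{2/p}(\cdot,\zeta)$. The converse---that every such $f$ attains equality---is a direct restatement of the sufficiency in Theorem~\ref{mainth}.

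It remains to establish that the operator norm of $I_{p,m}$ is exactly one (and not merely bounded by one) when $p\ge 1$. The plan is to exhibit an extremal: for any fixed $\zeta\in\mathbf U^n$, the function $g(z)=K_n^{2/p}(z,\zeta)=\prod_{j=1}^n(1-z_j\overline{\zeta}_j)^{-2/p}$ lies in $H^p(\mathbf U^n)$, since on $\mathbf T^n$ one has $|g|^p=\prod_{j=1}^n|1-\omega_j\overline{\zeta}_j|^{-2}$, and applying Fubini together with the standard one-variable identity gives
$$\int_{\mathbf T^n}|g|^p\,dm_n=\prod_{j=1}^n\frac{1}{1-|\zeta_j|^2}<\infty.$$
By the equality characterization already proved, this $g$ satisfies $\|g\|_{A^{mp}_{\mathbf{m-2}}}=\|g\|_{H^p}$, so $\|I_{p,m}\|=1$.

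The corollary is essentially a book-keeping consequence of Theorem~\ref{mainth}; the only point requiring a brief verification is that the reproducing-kernel powers $K_n^{2/p}(\cdot,\zeta)$ genuinely belong to $H^p(\mathbf U^n)$, so that the extremal family is nontrivial. I anticipate no real obstacle here.
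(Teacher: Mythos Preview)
Your proposal is correct and matches the paper's treatment: the paper states this corollary as an ``immediate consequence'' of Theorem~\ref{mainth} without a separate proof, and your specialization $f_1=\cdots=f_m=f$, $p_1=\cdots=p_m=p$ is exactly the intended one-line derivation. One trivial slip: you should take $(mp)$-th roots (not $p$-th roots) to pass from the integral inequality to the norm inequality, since the left side equals $\|f\|_{A^{mp}_{\mathbf{m-2}}}^{mp}$ and the right side equals $\|f\|_{H^p}^{mp}$.
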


\section{Proof of the main theorem}

Following Beckenbach and Rad$\acute{\mathrm{o}}$~\cite{ber}, we say that a non-negative
function $u$ is logarithmically subharmonic in a plane domain $\Omega$ if $u\equiv0$ or
if $\log u$ is subharmonic in $\Omega$.

Our first step is to extend the Burbea inequality~\eqref{burbea} to functions which
belong to the spaces $h^p_{PL}$ defined in the following sense: $u \in h^p_{PL}$
if it is logarithmically subharmonic and satisfies the growth property
\begin{equation}\label{growth}
\sup_{0\le r<1}\int_0^{2\pi}|u(re^{i\theta})|^p\frac{d\theta}{2\pi}<\infty.
\end{equation}
It is known that a function from these spaces has a radial limit in $e^{i\theta} \in
 \mathbf T$ for almost all $\theta \in [0,2\pi]$. Let us denote this limit (when it
exists) as $u(e^{i\theta})$. Then $\lim_{r\to 1}\int_0^{2\pi}|u(re^{i\theta})|^p\frac{d\theta}{2\pi}=
\int_0^{2\pi}|u(e^{i\theta})|^p\frac{d\theta}{2\pi}$ and
$\lim_{r\to 1}\int_0^{2\pi}|u(re^{i\theta})-u(e^{i\theta})|^p\frac{d\theta}{2\pi}=0$.
For an exposition of the topic of spaces of logarithmically subharmonic functions
which satisfy~\eqref{growth}, we refer to the book of Privalov~\cite{privalov}.

\begin{lemma}\label{loglema} Let $u_j\in h^{p_j}_{PL},\ j=1,\dots,m$, be logarithmically
subharmonic functions in the unit disc. Then
\begin{equation} \label{burbealog}
\int_\mathbf U \prod_{j=1}^m u_j^{p_j} d\mu_{m-2} \le \prod_{j=1}^m \int_{\mathbf T}u_j^{p_j} dm_1.
\end{equation}

For continuous functions, equality holds if and only if either some of the functions
are identically equal to zero or if for some point $\zeta \in \mathbf U$ and constants
$\lambda_j>0$,
$$u_j=\lambda_j\left|K_1(\cdot,\zeta)\right|^\frac 2{p_j},\quad j=1,\dots,m.$$
\end{lemma}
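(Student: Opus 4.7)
The plan is to reduce to Burbea's holomorphic inequality (Proposition~\ref{burbeadisc}) by dominating each log-subharmonic factor $u_j$ pointwise on $\mathbf U$ by $|F_j|$ for a suitable $F_j\in H^{p_j}$ with $|F_j|=u_j$ on $\mathbf T$.

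After an approximation step (replacing $u_j$ by $u_j(rz)+\epsilon$ for $r<1$ and $\epsilon>0$, which remains log-subharmonic and is continuous, strictly positive on $\overline{\mathbf U}$, then letting $\epsilon\to 0$ and $r\to 1^-$), I may assume each $u_j$ is continuous and strictly positive up to the boundary. Then $\log u_j$ is continuous and subharmonic on $\overline{\mathbf U}$, hence in $L^1(\mathbf T)$, and its Poisson integral
$$\phi_j(z):=\int_{\mathbf T}P(z,\omega)\log u_j(\omega)\,dm_1(\omega)$$
is harmonic on $\mathbf U$, continuous up to $\mathbf T$, and agrees with $\log u_j$ there. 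Since $\mathbf U$ is simply connected, $\phi_j$ admits a single-valued harmonic conjugate $\psi_j$, and I set
$$F_j:=\exp(\phi_j+i\psi_j),$$
a holomorphic function on $\mathbf U$ with $|F_j|=e^{\phi_j}$.

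Two facts follow at once. The maximum principle applied to the subharmonic function $\log u_j-\phi_j$ (continuous on $\overline{\mathbf U}$, vanishing on $\mathbf T$) gives $\log u_j\le\phi_j$ in $\mathbf U$, hence $u_j\le|F_j|$ in $\mathbf U$; and by construction $|F_j|=u_j$ on $\mathbf T$, so $F_j\in H^{p_j}$. Applying Proposition~\ref{burbeadisc} to $F_1,\dots,F_m$ together with the pointwise bound yields
$$\int_{\mathbf U}\prod_{j=1}^m u_j^{p_j}\,d\mu_{m-2} \le \int_{\mathbf U}\prod_{j=1}^m|F_j|^{p_j}\,d\mu_{m-2} \le \prod_{j=1}^m\int_{\mathbf T}|F_j|^{p_j}\,dm_1 = \prod_{j=1}^m\int_{\mathbf T}u_j^{p_j}\,dm_1,$$
which is~\eqref{burbealog}. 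In the equality case for continuous $u_j$, equality throughout forces equality in Proposition~\ref{burbeadisc}, giving $F_j=C_jK_1^{2/p_j}(\cdot,\zeta)$, and equality in the pointwise dominance step, giving $u_j=|F_j|$ in $\mathbf U$ by continuity; thus $u_j=|C_j|\,|K_1(\cdot,\zeta)|^{2/p_j}$.

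The main obstacle I anticipate is the limiting step back to general $u_j\in h^{p_j}_{PL}$: on the right one uses the monotonicity of the subharmonic means $r\mapsto\int_{\mathbf T}u_j(r\omega)^{p_j}\,dm_1(\omega)$, while on the left one invokes Fatou's lemma, relying on the fact that $u_j(rz)\to u_j(z)$ for almost every $z\in\mathbf U$ as $r\to 1^-$. A minor but necessary check, that $u+\epsilon$ is log-subharmonic whenever $u$ is, follows from a direct computation of $\Delta\log(u+\epsilon)$ using $u\Delta u\ge|\nabla u|^2$.
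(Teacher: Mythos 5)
Your core mechanism is exactly the paper's: construct the outer function $F_j$ whose boundary modulus is $u_j$, use subharmonicity of $\log u_j$ to get $u_j\le|F_j|$ in $\mathbf U$, and feed $F_1,\dots,F_m$ into Proposition~\ref{burbeadisc}; your $F_j=\exp(\phi_j+i\psi_j)$ is literally the paper's $f_j=\exp\bigl(\int_0^{2\pi}\frac{e^{i\theta}+z}{e^{i\theta}-z}\log u_j(e^{i\theta})\frac{d\theta}{2\pi}\bigr)$. However, the approximation scaffolding you wrap around this has two genuine gaps. First, $u_j(rz)+\epsilon$ is \emph{not} continuous in general: logarithmically subharmonic functions are merely upper semi-continuous, and dilation does not improve this. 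So the reduction ``I may assume each $u_j$ is continuous and strictly positive up to the boundary'' fails, and with it your claims that $\phi_j$ extends continuously to $\mathbf T$ with boundary values $\log u_j$ and that $\log u_j-\phi_j$ is ``continuous on $\overline{\mathbf U}$, vanishing on $\mathbf T$''. (The domination itself survives, because the Poisson--Jensen inequality $v(z)\le\int P(z,\omega)v(\omega)\,dm_1(\omega)$ holds for any $v$ subharmonic in a neighbourhood of $\overline{\mathbf U}$ without continuity, but that is not the argument you gave.) Second, and more seriously, your passage $r\to1^-$ on the left-hand side rests on ``$u_j(rz)\to u_j(z)$ for almost every $z$'', which you do not justify and which is precisely the direction upper semi-continuity does not supply: Fatou requires $\liminf_{r\to1}\prod_j u_j(rz)^{p_j}\ge\prod_j u_j(z)^{p_j}$ a.e., i.e.\ lower semi-continuity along radii, and for a general subharmonic function this is not automatic. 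The step can be repaired without any pointwise convergence---substitute $w=rz$, so the left side becomes $\frac{m-1}{\pi r^2}\int_{r\mathbf U}\prod_j u_j(w)^{p_j}\left(1-|w|^2/r^2\right)^{m-2}dA(w)$, whose integrand is nondecreasing in $r$ for $m\ge2$, and let monotone convergence finish---but as written the limit is not proved.

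The paper avoids both issues by never approximating: it works directly with the a.e.\ radial boundary values $u_j(e^{i\theta})$, whose logarithm is integrable by the Privalov theory of the classes $h^p_{PL}$ invoked in the text, defines $f_j$ from them, obtains $u_j(z)\le|f_j(z)|$ from $\log u_j(z)\le\int_0^{2\pi}P(z,e^{i\theta})\log u_j(e^{i\theta})\frac{d\theta}{2\pi}$, and---since $u_j|_{\mathbf T}$ is then only in $L^{p_j}$ rather than bounded---uses Jensen's inequality on the Poisson integral to verify $f_j\in H^{p_j}$, a step your bounded-case shortcut skips but which becomes necessary once the (broken) approximation is removed. Your treatment of the equality case for continuous $u_j$ matches the paper's. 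In short: right idea and right key lemma, but the reduction to the continuous case and the limiting argument both need to be fixed or, better, discarded in favour of the direct construction from the boundary values.
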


\begin{proof} Suppose that no one of the functions $u_j,\ j=1,\dots,m$ is identically
equal to zero. Then $\log u_j(e^{i\theta})$ is integrable on the segment $[0,2\pi]$ and
there exist $f_j \in H^{p_j}$ such that $u_j(z)\le |f_j(z)|,\ z \in \mathbf U$ and
$u_j(e^{i\theta})=|f_j(e^{i\theta})|,\ \theta \in [0,2\pi]$. Namely, for $f_j$ we can
choose
$$f_j(z)=
\exp\left(\int_0^{2\pi}\frac{e^{i\theta}+z}{e^{i\theta}-z}\log u_j(e^{i\theta}) \frac{d\theta}{2\pi}\right),
\quad j=1,\dots,m.$$
Since $\log u_j$ is subharmonic we have
$$\log u_j(z)
\le \int_0^{2\pi}P(z,e^{i\theta})\log u_j(e^{i\theta})\frac{d\theta}{2\pi},$$
where $P(z,e^{i\theta})=\frac{1-|z|^2}{|z-e^{i\theta}|^2}$ is the Poisson kernel for
the disc $\mathbf U$. From this it follows the $u_j(z) \le |f_j(z)|,\ z \in \mathbf U$.
Moreover, using the Jensen inequality (for the concave function $\log$) we obtain
\[\begin{split}
\log |f_j(z)|^{p_j} &=
\int_0^{2\pi}P(z,e^{i\theta})\log|u_j(e^{i\theta})|^{p_j}\frac{d\theta}{2\pi}
\\&\le\log\int_0^{2\pi}P(z,e^{i\theta})u^{p_j}_j(e^{i\theta})\frac{d\theta}{2\pi},
\end{split}\]
implying $f_j\in H^{p_j}$.

Now, in~\eqref{burbea} we can take $f_j,\ j=1,\dots, m$, and use the previous
relations, $u_j(z)\le |f_j(z)|,\ z \in \mathbf U$ and $u_j(e^{i\theta})=|f_j(e^{i\theta})|,
\ \theta \in [0,2\pi]$, to derive the inequality~\eqref{burbealog}.

If all of the functions $u_j,\ j=1,\dots,m$ are continuous (not equal to zero
identically) and if the equality in~\eqref{burbealog} occurs, then $u_j(z)=|f_j(z)|,\ z
 \in \mathbf U, \ j=1,\dots,m$. According to the equality in the Proposition~\ref{burbeadisc},
we must have $f_j=C_j K_1^\frac 2{p_j}(\cdot,\zeta),\ j=1,\dots, m$, for some point
$\zeta \in \mathbf U$ and constants $C_j\ne 0$. Thus, for continuous functions (not
equal to zero identically) equality holds if and only if
$u_j=\lambda_j|K_1(\cdot,\zeta)|^\frac 2{p_j},\ j=1,\dots,m\ (\lambda_j>0)$.
\end{proof}

We need the next two propositions concerning (logarithmically) subharmonic functions.
For the proofs of these propositions see the first paragraph of the book of Ronkin~\cite{ronkin}.

\begin{proposition}\label{ron1} Let $f$ be an upper semi-continuous function on a
product $\Omega \times \Delta$ of domains $\Omega \subseteq \mathbf R^n$ and
$\Delta \subseteq \mathbf R^k$. Let $\mu$ be a positive measure on $\Delta$ and
$E \subseteq \Delta$ be such that $\mu(E)<\infty$. Then
$$\varphi(x):=\int_E f(x,y)d\mu(y),\quad x \in \Omega$$
is (logarithmically) subharmonic if $f(\cdot,y)$ is (logarithmically) subharmonic
for all (almost all with respect to the measure $\mu$) $y \in \Omega$.
\end{proposition}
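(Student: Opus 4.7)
The plan is to split into the two cases: the subharmonic case will follow directly from Fubini's theorem, while the logarithmic case reduces, after a standard smoothing, to a pointwise Cauchy--Schwarz estimate. Before either case I would verify that $\varphi$ is upper semicontinuous: fixing $x_0\in\Omega$ and a compact neighborhood $K\subset\Omega$ of $x_0$, the upper semicontinuity of $f$ together with $\mu(E)<\infty$ provides an integrable local upper envelope for the family $\{f(x,\cdot)\}_{x\in K}$, so the reverse Fatou lemma yields $\limsup_{x\to x_0}\varphi(x)\le\varphi(x_0)$.

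In the subharmonic case I would invoke the sub-mean value inequality
$$f(x_0,y)\le\frac{1}{\sigma_{n-1}r^{n-1}}\int_{\partial B(x_0,r)}f(z,y)\,d\sigma(z),$$
valid for $\mu$-a.e.\ $y\in E$ and every closed ball $\overline{B(x_0,r)}\subset\Omega$. Integrating in $y$ over $E$ and exchanging the order of integration by Fubini gives the analogous inequality for $\varphi$, which combined with the upper semicontinuity established above proves that $\varphi$ is subharmonic.

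For the logarithmic case the main obstacle is that $\log\int f\,d\mu$ is not the integral of $\log f$, so Fubini cannot be applied directly. I would reduce to the case where $f(x,y)$ is $C^\infty$ in $x$ and strictly positive, by mollifying in $x$ on a slightly shrunken subdomain and then adding a small $\eps>0$; preservation of log-subharmonicity under mollification, translation, positive scaling and addition of positive constants is standard (and can itself be obtained from the finite-sum case of the present lemma, proved by a short AM--GM computation). One returns to the original $\varphi$ by letting the mollifier shrink and $\eps\to 0$, using upper semicontinuity together with monotone/dominated convergence. Under these reductions, log-subharmonicity of $f(\cdot,y)$ is the pointwise inequality $f\,\Delta_x f\ge|\nabla_x f|^2$. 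Differentiating $\varphi=\int_E f\,d\mu$ under the integral and applying the Cauchy--Schwarz inequality yields
$$|\nabla\varphi|^2=\left|\int_E\frac{\nabla_x f}{\sqrt{f}}\,\sqrt{f}\,d\mu\right|^2\le\left(\int_E\frac{|\nabla_x f|^2}{f}\,d\mu\right)\left(\int_E f\,d\mu\right),$$
while the hypothesis rewritten as $\Delta_x f\ge|\nabla_x f|^2/f$ gives $\Delta\varphi\ge\int_E|\nabla_x f|^2/f\,d\mu$. Combining these two estimates produces $\varphi\,\Delta\varphi\ge|\nabla\varphi|^2$, which is precisely $\Delta\log\varphi\ge 0$.

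The hardest point to pin down rigorously is the approximation in the logarithmic case: ensuring that the regularization preserves log-subharmonicity in $x$ uniformly in $y$, and that the limits commute with $\log$. The cleanest route, as indicated above, is to first prove the finite-sum special case by the same Cauchy--Schwarz calculation (treating $\mu$ as a sum of point masses, where differentiability is automatic) and then approximate general $\mu$ by discrete measures, exploiting the standard stability of log-subharmonicity under decreasing pointwise limits of upper semicontinuous functions.
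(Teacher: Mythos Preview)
The paper does not prove this proposition; it simply refers to Ronkin's book for the proof. So there is no in-paper argument to compare against, and what follows is just an assessment of your proposal.

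Your treatment of upper semicontinuity and of the subharmonic case is the standard one and is correct (modulo the implicit assumption, already built into the statement, that $f(x,\cdot)$ has an integrable upper envelope on $E$ locally in $x$).

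For the logarithmic case your Cauchy--Schwarz identity
\[
\varphi\,\Delta\varphi \;\ge\; \Bigl(\int_E f\,d\mu\Bigr)\Bigl(\int_E \frac{|\nabla_x f|^2}{f}\,d\mu\Bigr)\;\ge\;|\nabla\varphi|^2
\]
is perfectly valid once everything is $C^2$ and positive, but the reduction you sketch is shakier than you indicate. The sentence ``treating $\mu$ as a sum of point masses, where differentiability is automatic'' is not right: if $\mu=\sum c_j\delta_{y_j}$ then $\varphi(x)=\sum c_j f(x,y_j)$, and each $f(\cdot,y_j)$ is merely log-subharmonic, not smooth, so you still need to mollify in $x$. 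But showing that mollification in $x$ preserves log-subharmonicity is itself an instance of the very statement you are proving (a convolution is an integral of translates against a finite measure). The circularity can be broken along the lines you suggest---prove the finite-sum case first by the direct AM--GM/Jensen estimate, deduce stability under mollification, then run your smooth computation---but this is considerably heavier than necessary.

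A much shorter route, which avoids smoothing altogether and reduces the logarithmic case to the subharmonic case you have already handled, is the characterization: a nonnegative u.s.c.\ function $u$ is log-subharmonic if and only if $u\,e^{h}$ is subharmonic for every harmonic $h$ (equivalently, for every linear $h(x)=\langle a,x\rangle$). Granting this, for each harmonic $h$ the functions $x\mapsto f(x,y)\,e^{h(x)}$ are subharmonic for $\mu$-a.e.\ $y$, so by your subharmonic case
\[
e^{h(x)}\varphi(x)=\int_E f(x,y)\,e^{h(x)}\,d\mu(y)
\]
is subharmonic; since $h$ was arbitrary, $\varphi$ is log-subharmonic. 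This is the argument one finds in the references the paper cites (Ronkin; H\"ormander, Corollary~1.6.8).
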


\begin{proposition}\label{ron2} Let $A$ be an index set and $\{u_{\alpha},\ \alpha
 \in A\}$ a family of (logarithmically) subharmonic functions in a domain $\Omega
 \subseteq \mathbf R^n$. Then
$$u(x):=\sup_{\alpha\in A}u_{\alpha}(x),\quad x \in \Omega$$
is (logarithmically) subharmonic if it is upper semi-continuous in the domain $\Omega$.
\end{proposition}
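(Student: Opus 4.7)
The plan is to verify the sub-mean value property for $u$ and then invoke the characterization that an upper semi-continuous function satisfying it is subharmonic. First I would fix $x_0 \in \Omega$ and a closed ball $\overline{B(x_0,r)} \subset \Omega$, and start from the subharmonicity of each individual $u_\alpha$, which gives
\[
u_\alpha(x_0) \le \frac{1}{\sigma_{n-1}r^{n-1}} \int_{\partial B(x_0,r)} u_\alpha(y)\, d\sigma(y),
\]
where $\sigma_{n-1}$ is the surface area of the unit sphere in $\R^n$. Since $u_\alpha \le u$ pointwise on $\Omega$ and $u$ is upper semi-continuous (hence Borel measurable and bounded above on the compact sphere), the right-hand side is bounded above by the corresponding average of $u$; thus
\[
u_\alpha(x_0) \le \frac{1}{\sigma_{n-1}r^{n-1}} \int_{\partial B(x_0,r)} u(y)\, d\sigma(y).
\]
Taking the supremum over $\alpha \in A$ on the left produces the spherical sub-mean value inequality for $u(x_0)$. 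Combined with the hypothesis that $u$ is upper semi-continuous, this is precisely the standard characterization of subharmonicity, so $u$ is subharmonic in $\Omega$.

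For the logarithmically subharmonic case I would reduce to the subharmonic case by passing to logarithms. If every $u_\alpha \equiv 0$ then $u \equiv 0$ and there is nothing to prove. Otherwise, for those $\alpha$ with $u_\alpha \not\equiv 0$ the function $\log u_\alpha$ (with value $-\infty$ allowed where $u_\alpha = 0$) is subharmonic by the definition of logarithmic subharmonicity, and the identity
\[
\log u(x) = \log \sup_{\alpha \in A} u_\alpha(x) = \sup_{\alpha \in A} \log u_\alpha(x)
\]
holds because $\log$ is monotone. Since $u$ is nonnegative and upper semi-continuous, $\log u$ is upper semi-continuous as well (composition of an upper semi-continuous function with a nondecreasing upper semi-continuous function). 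Applying the first half of the argument to the family $\{\log u_\alpha\}$ shows that $\log u$ is subharmonic, i.e.\ $u$ is logarithmically subharmonic.

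The only place that needs care is the interchange of supremum and integral in the opposite direction that one might be tempted to perform. I avoid it entirely: the bound $u_\alpha \le u$ under the integral is used in the easy direction, and only on the outside, where $u(x_0) = \sup_\alpha u_\alpha(x_0)$, is the supremum actually taken. Thus the potentially delicate issue of measurability and integrability of $u = \sup_\alpha u_\alpha$ (which in general need not be Borel from arbitrary suprema) is handled entirely by the hypothesis of upper semi-continuity. I expect this measurability/semi-continuity bookkeeping to be the only subtle point; once one notes that upper semi-continuous functions are automatically Borel and bounded above on compact sets, the proof proceeds by a single line of inequality chasing as above.
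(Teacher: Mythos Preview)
Your argument is correct and is the standard textbook proof. Note, however, that the paper does not actually prove this proposition: it merely states it and refers the reader to Ronkin's book for the proof. So there is no ``paper's own proof'' to compare against beyond that citation; what you have written is essentially what one finds in Ronkin (or any text on potential theory), namely verifying the sub--mean--value inequality for $u$ from that of each $u_\alpha$ via the monotonicity $u_\alpha \le u$ and then invoking upper semi-continuity, with the logarithmic case reduced to the plain case by the monotone bijection $\log$.
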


Also, we need the next theorem due to Vitali (see~\cite{halmos}).

\begin{theorem}[Vitali] Let $X$ be a measurable space with finite measure $\mu$, and
let $h_n:X \to \mathbf C$ be a sequence of functions that are uniformly integrable, i.e.,
such that for every $\epsilon>0$ there exists $\delta>0$, independent of $n$, satisfying
$$\mu(E)<\delta \Rightarrow \int_E |h_n|d\mu < \epsilon.$$
Then if $h_n(x) \to h(x)$ a.e., then
$$\lim_{n \to \infty}\int_X |h_n| d\mu=\int_X |h| d\mu.$$

In particular, if $\sup_n \int_X |h_n| d\mu < \infty$, then the previous condition
holds.
\end{theorem}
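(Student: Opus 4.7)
The plan is to follow the classical route via Egorov's theorem, reducing matters to estimating integrals on a ``good'' set where convergence is uniform and on a ``bad'' set of small measure where the uniform integrability hypothesis takes over. I will actually prove the slightly stronger statement $\lim_n\int_X|h_n-h|\,d\mu=0$; this implies the stated conclusion at once via the reverse triangle inequality $\bigl|\int_X|h_n|\,d\mu-\int_X|h|\,d\mu\bigr|\le\int_X|h_n-h|\,d\mu$.

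First I would check that $h$ is integrable. Since $\mu(X)<\infty$, applying the uniform integrability hypothesis with, say, $\epsilon=1$ and covering $X$ by finitely many pieces of measure less than the corresponding $\delta$ yields $\sup_n\int_X|h_n|\,d\mu<\infty$. Combined with $h_n\to h$ a.e.\ and Fatou's lemma applied to $|h_n|$, this gives $h\in L^1(\mu)$.

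Next, fix $\epsilon>0$ and choose $\delta>0$ from the hypothesis so that $\mu(E)<\delta$ implies $\int_E|h_n|\,d\mu<\epsilon/3$ uniformly in $n$. Since $\mu(X)<\infty$ and $h_n\to h$ a.e., Egorov's theorem supplies a measurable set $A\subseteq X$ with $\mu(X\setminus A)<\delta$ on which $h_n\to h$ uniformly. Split
$$\int_X|h_n-h|\,d\mu=\int_A|h_n-h|\,d\mu+\int_{X\setminus A}|h_n-h|\,d\mu.$$
Uniform convergence (together with $\mu(A)\le\mu(X)<\infty$) makes the first term at most $\epsilon/3$ for all sufficiently large $n$. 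For the second term the triangle inequality reduces matters to $\int_{X\setminus A}|h_n|\,d\mu$ and $\int_{X\setminus A}|h|\,d\mu$; the former is below $\epsilon/3$ by the choice of $\delta$, and the latter is controlled by applying Fatou's lemma on the set $X\setminus A$ to $(|h_n|)$, yielding $\int_{X\setminus A}|h|\,d\mu\le\liminf_n\int_{X\setminus A}|h_n|\,d\mu\le\epsilon/3$. Summing the three estimates gives $\int_X|h_n-h|\,d\mu<\epsilon$ for $n$ large, as wanted.

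The main obstacle is that there is no direct uniform integrability control on the limit $h$, and this is exactly what Fatou's lemma on $X\setminus A$ circumvents. The concluding sentence of the theorem I read as a shorthand for the standard companion fact that if $\sup_n\int_X|h_n|^r\,d\mu<\infty$ for some $r>1$, then $(h_n)$ is automatically uniformly integrable: for any measurable $E$, H\"older's inequality gives $\int_E|h_n|\,d\mu\le\bigl(\int_X|h_n|^r\,d\mu\bigr)^{1/r}\mu(E)^{1-1/r}$, so the main conclusion applies. This is a short three-line estimate and does not require any further machinery beyond what has already been used above.
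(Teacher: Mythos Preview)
The paper does not supply its own proof of this theorem; it merely states it and cites Halmos~\cite{halmos}. So there is no argument in the paper to compare yours against, and the relevant question is simply whether your proof is correct.

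Your argument for the main assertion is the standard one and is fine: finiteness of $\mu$ plus Egorov's theorem gives a set of nearly full measure on which convergence is uniform, uniform integrability handles the tail on the small complement, and Fatou's lemma transfers the uniform-integrability bound from the $h_n$ to the limit $h$. The preliminary step showing $h\in L^1(\mu)$ via Fatou is also correct. Proving $\int|h_n-h|\to 0$ and then invoking the reverse triangle inequality is a clean way to conclude.

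The one point that deserves comment is the final sentence of the statement, ``if $\sup_n\int_X|h_n|\,d\mu<\infty$, then the previous condition holds.'' Read literally this is false: $h_n=n\chi_{[0,1/n]}$ on $[0,1]$ is bounded in $L^1$ but not uniformly integrable. You rightly flag this and reinterpret it as boundedness in $L^r$ for some $r>1$, which does imply uniform integrability via H\"older exactly as you write. That reinterpretation is sensible and matches how the paper actually \emph{uses} the theorem in the proof of Lemma~\ref{hlemma}: there the family $\{\,|f(z,e^{i\eta})|^p:z\in K\,\}$ is in fact locally uniformly bounded (not merely $L^1$-bounded), so the hypothesis in any reasonable version of Vitali is satisfied. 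Just be aware that your proof covers a (correct) companion statement rather than the literal ``In particular'' clause as printed.
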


\begin{lemma}\label{hlemma} Let $f\in H^p(\mathbf U^2)$. Then
$$\phi(z):=\int_0^{2\pi}|f(z,e^{i\eta})|^p\frac{d\eta}{2\pi}$$
is continuous. Moreover, $\phi$ is logarithmically subharmonic and belongs to the space
$h^1_{PL}$.
\end{lemma}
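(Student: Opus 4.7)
The plan is to establish the three assertions in the order: continuity, logarithmic subharmonicity, and membership in $h^1_{PL}$. The main obstacle is continuity; once it is in hand, the remaining two follow quickly from Propositions~\ref{ron1} and~\ref{ron2} together with Fubini.

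For continuity, I will fix $z_0 \in \mathbf U$ and a sequence $z_k \to z_0$ and apply Vitali's theorem to the functions $\eta \mapsto |f(z_k, e^{i\eta})|^p$ on $[0, 2\pi]$ to conclude $\phi(z_k) \to \phi(z_0)$. Pointwise a.e.\ convergence will come from the following standard polydisc Hardy-space fact: for almost every $\eta$, the slice $\xi \mapsto f(\xi, e^{i\eta})$, defined as the radial boundary limit in the second variable, extends to a holomorphic function on $\mathbf U$ belonging to $H^p(\mathbf U)$ and having boundary trace $f(e^{i\theta}, e^{i\eta})$ a.e.\ in $\theta$. Continuity of this slice at $z_0$ then gives $f(z_k, e^{i\eta}) \to f(z_0, e^{i\eta})$ for a.e.\ $\eta$. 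For the uniform integrability hypothesis, I will use subharmonicity of $|f(\cdot, e^{i\eta})|^p$ on $\mathbf U$ together with its $L^p$ boundary trace to obtain the Poisson majorization
\[
|f(z, e^{i\eta})|^p \le \int_0^{2\pi} P(z, e^{i\theta}) |f(e^{i\theta}, e^{i\eta})|^p \frac{d\theta}{2\pi},
\]
and then restrict $z$ to a compact neighborhood $K$ of $z_0$ on which $P(z, e^{i\theta}) \le M_K$; setting $H(\eta) := \int_0^{2\pi} |f(e^{i\theta}, e^{i\eta})|^p \, d\theta/(2\pi)$, Fubini combined with the $L^p(\mathbf T^2)$ integrability of the boundary function of $f$ shows $H \in L^1([0, 2\pi])$, so that $M_K H$ is an integrable envelope for the sequence once $k$ is large. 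Vitali's theorem then yields the desired convergence.

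For logarithmic subharmonicity I will argue by approximation. For each $s \in [0, 1)$ put $\phi_s(z) := \int_0^{2\pi} |f(z, se^{i\eta})|^p \, d\eta/(2\pi)$. The integrand is continuous on $\mathbf U \times [0, 2\pi]$ and, for each fixed $\eta$, logarithmically subharmonic in $z$ (since $f(\cdot, se^{i\eta})$ is holomorphic), so Proposition~\ref{ron1} gives that $\phi_s$ is logarithmically subharmonic. Subharmonicity of $|f(z, \cdot)|^p$ on $\mathbf U$ makes the circular means $\phi_s(z)$ non-decreasing in $s$, with $\phi_s(z) \nearrow \phi(z)$ as $s \to 1$. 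Since the previous step established that $\phi$ is continuous, hence upper semi-continuous, Proposition~\ref{ron2} applied to the family $\{\phi_s\}_{0 \le s < 1}$ yields that $\phi$ is logarithmically subharmonic.

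For the $h^1_{PL}$ bound, Fubini and monotone convergence give, for every $r \in [0, 1)$,
\[
\int_0^{2\pi} \phi(re^{i\theta}) \frac{d\theta}{2\pi} = \lim_{s \to 1} \int_{\mathbf T^2} |f(re^{i\theta}, se^{i\eta})|^p \, dm_2 \le \|f\|_{H^p(\mathbf U^2)}^p,
\]
where the monotonicity in $s$ of the double radial integral is a consequence of the plurisubharmonicity of $|f|^p$. This bound is independent of $r$, so $\phi \in h^1_{PL}$. The main obstacle throughout is the pointwise convergence step in the continuity proof, which rests on the nontrivial fact that one-variable slices of $f$ at boundary points of the other variable exist as $H^p(\mathbf U)$ functions for almost every boundary point.
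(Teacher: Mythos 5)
Your argument is correct and follows the same skeleton as the paper's proof: continuity is obtained by applying Vitali's theorem to $\eta\mapsto|f(z_k,e^{i\eta})|^p$ along a sequence $z_k\to z_0$, logarithmic subharmonicity comes from the increasing family $\phi_s$ together with Propositions~\ref{ron1} and~\ref{ron2}, and the $h^1_{PL}$ bound from Fubini and monotone convergence. The one genuine divergence is in how the hypothesis of Vitali's theorem is secured, and here your route is the stronger one. The paper proves the Hardy--Littlewood pointwise estimate $\phi(z)\le C_p(1-|z|)^{-1/p}\|f\|_p$, concludes only that the integrals $\int_0^{2\pi}|f(z_k,e^{i\eta})|^p\,d\eta/2\pi$ are uniformly bounded on compact subsets, and then invokes the final clause of its Vitali statement, according to which $\sup_k\int|h_k|\,d\mu<\infty$ already implies uniform integrability; that clause is false in general (take $h_k=k\chi_{[0,1/k]}$ on $[0,1]$), so the paper's continuity argument has a soft spot exactly where you put your effort. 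You instead produce a genuine integrable envelope, $|f(z,e^{i\eta})|^p\le M_K\,H(\eta)$ for $z$ in a compact neighborhood $K$ of $z_0$, with $H\in L^1([0,2\pi])$ by Fubini, via the Poisson majorization of the a.e.\ slices $f(\cdot,e^{i\eta})\in H^p(\mathbf U)$; domination then gives uniform integrability (indeed, plain dominated convergence would suffice). The price is that you must invoke the slice theorem for $H^p(\mathbf U^2)$ --- that for almost every $\eta$ the boundary slice is a bona fide $H^p(\mathbf U)$ function whose boundary trace agrees with that of $f$ --- but this is standard (Rudin, Chapter~3) and the paper already uses the holomorphy of a.e.\ slices implicitly through its citation of Zygmund. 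You also verify the growth condition~\eqref{growth} for $\phi$ explicitly, which the paper asserts but does not write out. In short: same architecture, but your treatment of the uniform integrability step is more careful than, and effectively repairs, the corresponding step in the paper.
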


\begin{proof} For $0\le r<1$, let us denote
$$\phi_r(z):=\int_0^{2\pi}|f(z,re^{i\eta})|^p\frac{d\eta}{2\pi},
\quad z \in \mathbf U.$$
According to Proposition~\ref{ron1}, $\phi_r$ is logarithmically subharmonic in the
unit disc, since $z\to|f(z,re^{i\eta})|^p$ are logarithmically subharmonic for $\eta
 \in [0,2\pi]$. Since for all $z \in \mathbf U$ we have $\phi_r(z) \to \phi (z)$,
monotone as $r\to 1$, it follows that $\phi(z)=\sup_{0\le r<1}\phi_r(z)$. Thus, we
have only to prove (by Proposition~\ref{ron2}) that $\phi$ is continuous.

First of all we have
\begin{equation}\label{hardy}
\phi(z)=\|f(z,\cdot)\|_p \le C_p (1-|z|)^{-\frac 1p}\|f\|_p,\quad z \in \mathbf U,
\end{equation}
for some positive constant $C_p$. Namely, according to the theorem of Hardy and
Littlewood (see~\cite{hardy}, Theorem 27 or~\cite{duren}, Theorem 5.9) applied to
the one variable function $f(\cdot,w)$ with $w$ fixed, we obtain
$$|f(z,w)|\le C_p(1-|z|)^{-\frac 1p}\|f(\cdot,w)\|_p,\quad (z,w)\in\mathbf U^2,$$
for some $C_p>0$. Using the above inequality and the monotone convergence theorem,
we derive
\[\begin{split}
\|f(z,\cdot)\|_p^p &=\lim_{s\to 1}\int_0^{2\pi}|f(z, se^{i\eta})|^p\frac{d\eta}{2\pi}
\\&\le C_p^p(1-|z|)^{-1}\lim_{s\to 1}\int_0^{2\pi}\|f(\cdot,se^{i\eta})\|_p^p\frac{d\eta}{2\pi}
\\&=C_p^p(1-|z|)^{-1}\lim_{s\to 1}\int_0^{2\pi}\frac{d\theta}{2\pi}\lim_{r\to 1} \int_0^{2\pi}|f(re^{i\theta}, se^{i\eta})|_p^p\frac{d\eta}{2\pi}
\\&=C_p^p(1-|z|)^{-1}\lim_{(r,s)\to (1,1)}\int_0^{2\pi}\int_0^{2\pi}|f(re^{i\theta},se^{i\eta})|_p^p\frac{d\theta}{2\pi}\frac{d\eta}{2\pi}
\\&=C_p^p(1-|z|)^{-1}\|f\|_p^p,
\end{split}\]
and~\eqref{hardy} follows.

The inequality~\eqref{hardy} implies that the family of integrals
$$\left\{ \phi(z)=\int_0^{2\pi}|f(z,e^{i\eta})|^p \frac{d\eta}{2\pi}: z \in \mathbf U \right\}$$
is uniformly bounded on compact subsets of the unit disc. Since $z \to |f(z,e^{i\eta})|^p$
is continuous for almost all $\eta \in [0,2\pi]$, as a module of a holomorphic function
(according to~\cite{zygmund}, Theorem XVII 5.16) it follows that $\phi(z),\ z \in \mathbf U$
is continuous. Indeed, let $z_0 \in \mathbf U$ and let $(z_k)_{k\ge1}$ be a sequence
in the unit disc such that $z_k\to z_0,\ k\to\infty$. According to the Vitali theorem
we have
$$\lim_{k\to\infty}\phi(z_k)
=\lim_{k\to\infty}\int_0^{2\pi}|f(z_k,e^{i\eta})|^p\frac{d\eta}{2\pi}
=\int_0^{2\pi}|f(z_0, e^{i\eta})|^p\frac{d\eta}{2\pi}=\phi(z_0).$$
\end{proof}

We now prove the main Theorem~\ref{mainth}.

\begin{proof} Let $f_j \in H^{p_j}(\mathbf U^2),\ j=1,\dots,m$ be holomorphic
functions in the polydisc $\mathbf U^2$. Using the Fubini theorem, Proposition~\ref{burbeadisc},
and Lemma~\ref{loglema}, we obtain
\[\begin{split}
\int_{\mathbf U^2}\prod_{j=1}^m|f_j|^{p_j}d\mu_{(m-2,m-2)}
&=\int_\mathbf U d\mu_{m-2}(z)\int_\mathbf U\prod_{j=1}^m|f_j(z,w)|^{p_j}d\mu_{m-2}(w)
\\&\le\int_\mathbf U d\mu_{m-2}(z)\prod_{j=1}^m\int_0^{2\pi}|f_j(z,e^{i\eta})|^{p_j}\frac{d\eta}{2\pi}
\\&\le\prod_{j=1}^m\int_0^{2\pi}\frac{d\theta}{2\pi}\int_0^{2\pi}|f_j(e^{i\theta},e^{i\eta})|^{p_j}\frac{d\eta}{2\pi}
\\&=\prod_{j=1}^m\int_{\mathbf T^2}|f_j|^{p_j}dm_2,
\end{split}\]
since the functions $\phi_j(z):=\int_0^{2\pi}|f_j(z,e^{i\eta})|^{p_j}\frac{d\eta}{2\pi}$
are logarithmically subharmonic in the disc $\mathbf U$ and since $\phi_j\in h^1_{PL},\
j=1,\dots,m$, by Lemma~\ref{hlemma}.

We now determine when the equalities hold in the above inequalities. Obviously,
if some of functions $f_j,\ j=1,\dots,m$ are identically equal to zero, we have
equalities everywhere. Suppose this is not the case. We will first prove that
$f_j,\ j=1\dots,m$ do not vanish in the polydisc $\mathbf U^2$.

Since for $j=1,\dots,m$ we have $\phi_j \not \equiv 0$, the equality obtains in
the second inequality if and only if for some point $\zeta'' \in \mathbf U$ and
$\lambda_j>0$ we have $\phi_j=\lambda_j|K_1(\cdot,\zeta'')|^2,\ j=1,\dots,m$.
Thus, $\phi_j$ is free of zeroes in the unit disc. Let
$$\psi(z):=\int_\mathbf U \prod_{j=1}^m|f_j(z,w)|^{p_j}d\mu_{m-2}(w),
 \quad z \in \mathbf U.$$
The function $\psi$ is continuous; we can prove the continuity of $\psi$ in a
similar fashion as for $\phi_j$, observing that $\psi(z),\ z \in \mathbf U$ is
uniformly bounded on compact subsets of the unit disc, which follows from the
inequality $\psi(z) \le \prod_{j=1}^m\phi_j(z)$ since the $\phi_j,\ j=1,\dots,m$
satisfy this property. Because of continuity, the equality in the first inequality,
that is,
$$\int_\mathbf U \psi(z)d\mu_{m-2}(z) \le \int_\mathbf U\prod_{j=1}^m\phi_j(z) d\mu_{m-2}(z),$$
holds (by Proposition~\ref{burbeadisc}) only if for all $z \in \mathbf U$ and
some $\zeta'(z) \in \mathbf U$ and $C_j(z) \ne 0$,
$$f_j(z,\cdot)=C_j (z)K_1^\frac 2{p_j}(\cdot,\zeta'(z)),\quad j=1,\dots,m.$$
Since $\phi(z) \ne 0,\ z \in \mathbf U$, it is not possible that $f_j(z,\cdot) \equiv 0$
for some $j$ and $z$.

Thus, if equality holds in~\eqref{maineq}, then $f_j$ does not vanish, $f_j(z,w)
\ne 0,\ (z,w) \in \mathbf U^2$, and we can obtain some branches $f_j^\frac{p_j}2$.
Applying Proposition~\ref{burbeapoly} for $f_j^\frac{p_j}2,\ j=1,\dots,m$, we
conclude that there must hold
$$f_j^\frac{p_j}2=C'_j K_2(\cdot,\zeta),\quad j=1,\dots,m$$
for some point $\zeta \in \mathbf U^2$ and constants $C'_j\ne 0$. The equality
statement of Theorem~\ref{mainth} follows.
\end{proof}

\begin{remark} The generalized polydisc is a product $\Omega^n=\prod_{k=1}^n \Omega_k
\subset \mathbf C^n$, where $\Omega_k,\ k=1,\dots,n$ are simply connected
domains in the complex plane with rectifiable boundaries. Let
$\partial \Omega^n:=\prod_{k=1}^n \partial \Omega_k$ be its distinguished
boundary and let $\phi_k: \Omega_k \to \mathbf U,\ k=1,\dots,n$ be conformal
mappings. Then
$$\Phi(z):=(\phi_1(z_1),\dots,\phi_n (z_n)),\quad z=(z_1,\dots,z_n)$$
is a bi-holomorphic mapping of $\Omega^n$ onto $\mathbf U^n$.

There are two standard generalizations of Hardy spaces on a hyperbolic simple
connected plain domain $\Omega$. One is immediate, by using harmonic majorants,
denoted by $H^p(\Omega)$. The second is due to Smirnov, usually denoted by
$E^p(\Omega)$. The definitions can be found in the tenth chapter of the book of Duren~\cite{duren}.
These generalizations coincide if and only if the conformal mapping of $\Omega$
onto the unit disc is a bi-Lipschitz mapping (by~\cite[Theorem~10.2]{duren});
for example this occurs if the boundary is $C^1$ with Dini-continuous normal
(Warschawski's theorem, see~\cite{war}). The previous can be adapted for generalized
polydiscs (see the paper of Kalaj~\cite{kalaj}). In particular, $H^p(\Omega^n)= E^p(\Omega^n)$
and $\|\cdot\|_{H^p}=\|\cdot\|_{E^p}$, if the distinguished boundary $\partial \Omega^n$
is sufficiently smooth, which means $\partial \Omega_k,\ k=1,\dots,n$ are sufficiently
smooth. Thus, in the case of sufficiently smooth boundary, we may write
$$\|f\|_{H^p(\Omega^n)}
=\left(\frac 1{(2\pi)^n}\int_{\partial \Omega^n}|f(z)|^p|dz_1|\dots|dz_n|\right)
^\frac 1p,$$
where the integration is carried over the non-tangential (distinguished) boundary
values of $f \in H^p(\Omega^n)$.

By Bremerman's theorem (see~\cite[Theorem~4.8, pp. 91--93]{fuks}, $E^2(\Omega^n)$
is a Hilbert space with the reproducing kernel given by
\begin{equation}\label{repkers}
K_{\Omega^n}(z,\zeta):=
K_n(\Phi(z),\Phi(\zeta))\left(\prod_{k=1}^n \phi_k'(z_k)\overline{\phi_k'(\zeta_k)}
\right)^\frac 12,\quad z,\ \zeta \in \Omega^n
\end{equation}
where $K_n$ is the reproducing kernel for $H^2(\mathbf U^n)$; $K_{\Omega^n}$ does not
depend on the particular $\Phi$.

For the next theorem we need the following assertion. The sum $\varphi_1+\varphi_2$
is a logarithmically subharmonic function in $\Omega$ provided $\varphi_1$ and $\varphi_2$
are logarithmically subharmonic in $\Omega$ (see e.g.~\cite[Corollary~1.6.8]{hormander},
or just apply Proposition~\ref{ron1} for a discrete measure $\mu$). By applying this
assertion to the logarithmically subharmonic functions $\varphi_k(z)=|f_k(z)|^2,\ z \in
 \Omega,\ k=1,\dots,l$, where $f=(f_1,\dots,f_l)$ is a $\mathbb C^l$-valued holomorphic
function, and the principle of mathematical induction, we obtain that the function
$\varphi$ defined by
$$\varphi(z):=\|f(z)\|=\left(\sum_{k=1}^l|f_k(z)|^2\right)^\frac 12,\quad z \in \Omega$$
is logarithmically subharmonic in $\Omega$ (obviously, the positive exponent of a
logarithmically subharmonic function is also logarithmically subharmonic).

Theorem~\ref{mainth} in combination with the same approach as in~\cite{burbea}
and~\cite{kalaj} leads to the following sharp inequality for vector-valued
holomorphic functions which generalizes Theorem 3.5 in~\cite[p. 256]{burbea}; by
vector-valued we mean $\mathbb C^l$-valued for some integer $l$. We allow
vector-valued holomorphic functions to belong to the spaces $H^p(\Omega^n)$ if
they satisfy the growth condition~\eqref{growthhardy} with $\|\cdot\|$ instead of
$|\cdot|$.

Let $V_n$ be the volume measure in the space $\mathbf C^n$ and
$$\lambda_{\Omega^n}(z)=K_n(\Phi(z),\Phi(z))\prod_{k=1}^n|\phi'_k(z_k)|,\quad z \in
 \Omega^n$$ be the Poincar\'e metric on the generalized polydisc $\Omega^n$ (the right
side does not depend on the mapping $\Phi$).

\begin{theorem} Let $f_j \in H^{p_j}(\Omega^n),\ j=1,\dots,m$ be holomorphic vector-valued
functions on a generalized polydisc $\Omega^n$ with sufficiently smooth boundary. The
next isoperimetric inequality holds:
$$\frac{(m-1)^n}{\pi^n}
\int_{\Omega^n} \prod_{j=1}^m \|f_j(z)\|^{p_j}\lambda^{2-m}_{\Omega^n}(z)dV_n(z)
\le\prod_{j=1}^m \|f_j\|^{p_j}_{H^{p_j}(\Omega^n)}.$$

For complex-valued functions, the equality in the above inequality occurs if and only
if either some of the $f_j,\ j=1,\dots,m$ are identically equal to zero or if for some
point $\zeta \in \Omega^n$ and constants $C_j\ne0$, or $C_j'\ne0$, the functions have
the following form
$$f_j=
C_j K^\frac 2{p_j}_{\Omega^n}(\cdot,\zeta)=C'_j\left (\prod_{k=1}^n\psi'_k\right)^
\frac 1{p_j}, \quad j=1,\dots,m,$$
where $K_{\Omega^n}$ is the reproducing kernel for the domain $\Omega^n$ and $\psi_k:
\Omega_k \to \mathbf U,\ k=1,\dots,n$ are conformal mappings.
\end{theorem}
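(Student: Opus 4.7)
The plan is to reduce to the polydisc $\mathbf U^n$ via the biholomorphism $\Phi : \Omega^n \to \mathbf U^n$ of the remark, and to upgrade Theorem~\ref{mainth} to vector-valued holomorphic functions. Set $\psi_k := \phi_k^{-1} : \mathbf U \to \Omega_k$ and define
\[ F_j(w) := (f_j \circ \Phi^{-1})(w) \prod_{k=1}^n \bigl(\psi_k'(w_k)\bigr)^{1/p_j}, \qquad w \in \mathbf U^n. \]
Under the smoothness hypothesis, the identity $H^{p_j}(\Omega^n) = E^{p_j}(\Omega^n)$ recalled in the remark, combined with the change of variable $|dz_k| = |\psi_k'(w_k)|\,|dw_k|$, gives $F_j \in H^{p_j}(\mathbf U^n)$ with $\|F_j\|_{H^{p_j}(\mathbf U^n)} = \|f_j\|_{H^{p_j}(\Omega^n)}$. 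For the volume integral, the identity $|\phi_k'(z_k)|^{2-m} = |\psi_k'(w_k)|^{m-2}$ together with the real Jacobian $dV_n(z) = \prod_k |\psi_k'(w_k)|^2 \, dV_n(w)$ produces a surplus factor $\prod_k |\psi_k'(w_k)|^m$, which exactly cancels the analogous factor gained when rewriting $\prod_j \|f_j(z)\|^{p_j}$ as $\prod_j \|F_j(w)\|^{p_j}$; the prefactor $(m-1)^n/\pi^n$ matches the normalization of $d\mu_{\mathbf{m-2}}$ against $dV_n$. After these cancellations, the theorem reduces to
\[ \int_{\mathbf U^n} \prod_{j=1}^m \|F_j(w)\|^{p_j} \, d\mu_{\mathbf{m-2}}(w) \;\le\; \prod_{j=1}^m \int_{\mathbf T^n} \|F_j(\omega)\|^{p_j} \, dm_n(\omega), \]
which is the vector-valued analogue of Theorem~\ref{mainth}.

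To establish this analogue, I follow the proof of the main theorem verbatim with $|f_j|$ replaced by $\|F_j\|$. By the assertion recalled in the remark, $\|F_j(\cdot)\|$ is logarithmically subharmonic on $\mathbf U^n$, and each slice $w \mapsto \|F_j(z,w)\|$ (with $z$ fixed) is logarithmically subharmonic on $\mathbf U$. Treating the case $n=2$, Lemma~\ref{loglema} applied slice-wise yields
\[ \int_\mathbf U \prod_{j=1}^m \|F_j(z,w)\|^{p_j} \, d\mu_{m-2}(w) \;\le\; \prod_{j=1}^m \Psi_j(z), \quad \Psi_j(z) := \int_0^{2\pi} \|F_j(z, e^{i\eta})\|^{p_j} \frac{d\eta}{2\pi}. \]
By Proposition~\ref{ron1}, each $\Psi_j$ is logarithmically subharmonic in $z$, and a straightforward vector-valued adaptation of Lemma~\ref{hlemma}---using the componentwise Hardy--Littlewood pointwise bound applied to the scalar entries of $F_j$---gives both the continuity of $\Psi_j$ and the membership $\Psi_j^{1/p_j} \in h^{p_j}_{PL}$. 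A second application of Lemma~\ref{loglema} then closes the chain of inequalities. For $n > 2$, the same argument iterates one complex variable at a time.

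For the equality statement in the complex-valued case, I transport the equality condition of Theorem~\ref{mainth} back through the substitution: equality on $\mathbf U^n$ forces $F_j = C'_j K_n^{2/p_j}(\cdot, \zeta')$ for some $\zeta' \in \mathbf U^n$ and constants $C'_j \ne 0$, and the reproducing-kernel transformation rule~\eqref{repkers} then yields $f_j = C_j K_{\Omega^n}^{2/p_j}(\cdot, \zeta)$ with $\zeta := \Phi^{-1}(\zeta')$. If the conformal mappings $\psi_k$ are chosen so that $\psi_k(\zeta_k) = 0$, then $K_n(\Phi(\cdot), 0) \equiv 1$ and the kernel factor collapses to $\prod_k \bigl(\psi_k'(z_k)\bigr)^{1/p_j}$, yielding the alternative form. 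The main obstacle I anticipate is the vector-valued extension of Lemma~\ref{hlemma}: one must check that slicing $F_j \in H^{p_j}(\mathbf U^2)$ in one variable still produces a trace with the correct pointwise growth and continuity in the norm $\|\cdot\|$. This is routine since $\|F_j\|^2 = \sum_k |F_{j,k}|^2$ reduces the matter to a finite sum of scalar Hardy--Littlewood estimates, but the bookkeeping of powers of $(1-|z|)^{-1}$ and the Vitali-convergence argument must be carried out with care.
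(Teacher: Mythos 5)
Your proposal is correct and follows exactly the route the paper intends: the paper gives no detailed proof of this theorem, only the assertion that it follows from Theorem~\ref{mainth} together with the conformal transfer $F_j=(f_j\circ\Phi^{-1})\prod_k(\psi_k')^{1/p_j}$ and the logarithmic subharmonicity of $\|f\|$, and your reduction, the vector-valued rerun of the proof of Theorem~\ref{mainth} via Lemma~\ref{loglema} and Lemma~\ref{hlemma}, and the transport of the equality case through~\eqref{repkers} fill in precisely those details. The only blemish is notational: you use $\psi_k$ both for $\phi_k^{-1}:\mathbf U\to\Omega_k$ and, at the end, for the normalized conformal maps $\Omega_k\to\mathbf U$ of the theorem's statement, which should be disentangled.
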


In particular, for $n=1$ and $m=2$ and in the case of complex-valued functions, the
above inequality reduces to the result of Mateljevi\'c and Pavlovi\'c mentioned in the
Introduction.
\end{remark}
\subsection*{Acknowledgments}
I wish to thank Professors Miodrag Mateljvi\'c and David Kalaj for very useful comments.
Also, I am grateful to Professor Darko Mitrovi\'c for comments on the exposition and
language corrections.

\end{document}